\newtheorem{theorem}{Theorem}
\newtheorem{lemma}{Lemma}
\numberwithin{equation}{section}
\newcommand\norm[1]{\left\lVert#1\right\rVert}
\begin{document}

\dedicatory{In memory of a lovely human-being and Approx. Theorist Prof. D. V. Pai}
\leftline{ \scriptsize \it  }
\title[]{Characterization of deferred type statistical convergence and P-summability method for operators: Applications to $q$-Lagrange-Hermite operator}
\maketitle
\begin{center}
{\bf Purshottam Narain Agrawal$^1$, Rahul Shukla$^2$ \footnote{Corresponding author.}, and  Behar Baxhaku $^3$}
\vskip0.2in
$^{1,2}$Department of Mathematics\\
Indian Institute of Technology Roorkee\\
Roorkee-247667, India\\
$^1$ email: pnappfma@gmail.com\\
$^2$ email: rshukla@ma.iitr.ac.in,\\
$^3$ Department of Mathematics\\
University of Prishtina\\
Prishtina, Kosovo\\
$^3$email: behar.baxhaku@uni-pr.edu
\end{center}

\begin{abstract}
The present work considers two important convergence techniques, namely deferred type statistical convergence and P-summability method in respect of positive linear operators. With regard to these techniques, we state and prove two general non-trivial Korovkin type approximation results for such operators. Further, we define an operator based on multivariate q-Lagrange-Hermite polynomials and exhibit the applicability of our general theorems.

\noindent Keywords: Multivariate Lagrange-Hermite polynomials, q-Lagrange-Hermite polynomials, Deferred type statistical convergence, P-summability, modulus of continuity.\\
MSC(2020): 41A25, 41A36, 33C45, 26A15.
\end{abstract}

\section{Introduction: Deferred Weighted A-statistical convergence and P-summability method}
In Pure and Applied Mathematics, the idea of convergence of a sequence in a given space $X$ provides a lot of insight and applications. Once a sequence is convergent, we are free to use the analogy into continuity of a function and hence it is easy to dive in the areas of Topological Spaces, Measure Theory, Functional Analysis, Numerical Analysis, and Mathematical Modeling etc., but what would be the situation if the sequence is not convergent, can we still say something about it? The answers are indeed affirmative and occur in the form of Summability Theory, particularly Statistical Convergence. In the past few decades, this notion of convergence has been deeply studied and generalized. The methods not only catch the fine details of some special class of sequences but also provide the impression of convergence to such sequences(non-convergent in usual sense). In the present article, we discuss two such methods: a generalized form of statistical convergence and P-summability method, and their applications to the theory of approximation. In this row, we first, briefly, introduce the concept of statistical convergence and its deferred type generalization. After that, we shall discuss the P-summability method.

\noindent
Let $M$ be a subset of the set of natural numbers $\mathbb{N}$ and for each $n\in \mathbb{N}$, we define
$M_n = \{m\in M: m\leq n \}.$ The natural density $d(M)$ of the set $M$ is defined by the limit(if exists) of the sequence $\big<\frac{|M_{n}|}{n}\big>$. More precisely,
$$d(M) = \lim_{n\to \infty}\frac{|M_{n}|}{n}.$$ Researchers have proposed multifarious ways to define the density  and these definitions are playing a pivotal role in the areas of Number Theory and Graph Theory as well(see \cite{NRai, NRai1, Pandey}). Any sequence $\big<x_{n}\big> $ is called \textit{statistically convergent} to $l$ if, for each $\epsilon >0$, the set $\{k\in \mathbb{N}:k\leq n ~\text{and}~|x_{k}-l|\geq \epsilon\}$ has density zero, i.e.
$$\lim_{n\to \infty}\frac{|\{k\in \mathbb{N}:k\leq n ~\text{and}~|x_{k}-l|\geq \epsilon\}|}{n} = 0.$$  This definition clearly indicates that every convergent sequence is always statistically convergent, while the converse need not to be true in general. For a counter example, we can opt the sequence $\big<x_n\big> = \begin{cases}
1, & \text{if $n=m^2,$ $m\in \mathbb{N}$ } ,\\
0, & \text{otherwise}
\end{cases}
$. Clearly, $\big<x_n\big>$ is not convergent in the usual sense while one can easily verify that it is statistically convergent to 0. Karakaya et al. \cite{Kara} derived the concept of weighted statistical convergence and the idea was later modified by Mursaleen et al. in \cite{Mur}.\newline
Let us assume that $\big<s_{k}\big>$ be a sequence such that $s_k \geq 0$ and
$$S_n = \sum_{k=1}^{n}s_k, \quad s_1 >0,$$
denotes its partial sum. The sequence $\big<x_n\big>$ is called \textit{weighted statistically convergent} to a number $l$ if, for any given $\epsilon > 0$, the following holds
$$\lim_{n\to \infty}\frac{|\{k\in \mathbb{N}:k\leq S_n ~\text{and}~s_k|x_{k}-l|\geq \epsilon\}|}{S_n} = 0.$$ If $X_1$ and $X_2$ are sequence spaces such that for every infinite matrix $A=(a_{n,k}):X_1 \to X_2$, we have the transformation $(Ax)_n = \sum_{k=1}^{\infty}a_{n,k}x_{k}$. Then, the matrix $A$ is called regular if $\underset{n\to \infty}{\lim} (Ax)_n= l$ whenever $\underset{k\to \infty}{\lim} x_k= l$. For a non-negative regular matrix $A=(a_{n,k})$, Freedman et al. \cite{Free} proposed the concept of $A$-statistical convergence. The sequence $\big<x_n\big>$ is called $A$-statistically convergent to a number $l$ (denoted by $stat_A- \underset{n \to \infty}{\lim}x_n = l$), if
$$\lim_{n \to \infty}\sum_{k:|x_k-l|\geq \epsilon}a_{n,k}=0, \quad ~ \text{for every}~\epsilon >0.$$
Very recently, Srivastava et al. \cite{HM} derived a more general idea of $A$-statistical convergence and coined the word \textit{deferred weighted $A$-statistical convergence}. Suppose $\big<b_n\big>$ and $\big<c_n\big>$ are the sequences of non-negative integers satisfying the regularity conditions $b_n < c_n; \underset{n\to \infty}{\lim}c_n=\infty$. Now, if we set
$$S_n = \sum_{m=b_n+1}^{c_n}s_m,$$ for any given sequence $\big<s_n\big>$ of non-negative real numbers and for any given sequence $\big<x_n\big>$, its respective deferred weighted mean by $\rho_n= \frac{1}{S_n}\sum_{m=b_n+1}^{c_n}s_mx_m$. Then, the sequence $\big<x_n\big>$ is called \textit{deferred weighted summable} (denoted by $c^{DWS}-\underset{n\to \infty}{\lim}x_n =l$) to $l$ if $\underset{n \to \infty}{\lim}\rho_n = l$. Further, we call $\big<x_n\big>$ to be \textit{deferred weighted A-summable} to (denoted by $c^{DWS}_A-\underset{n\to \infty}{\lim}x_n =l$) a number $l$ if
$$\lim_{n\to \infty}\frac{1}{S_n}\sum_{m=b_n+1}^{c_n}\sum_{k=1}^{\infty}s_ma_{m,k}x_k = l.$$ Let $c^{DWS}$ be the space of all deferred weighted summable sequences then an infinite matrix $A=(a_{n,k})$, is called deferred weighted regular matrix if
$$(Ax)_n=\sum_{k=1}^{\infty}a_{n,k}x_k\in c^{DWS} \quad \text{for every convergent sequence}~ x=(x_n),$$
with $$c^{DWS}-\underset{n \to \infty}{\lim}(Ax)_n = stat_A- \underset{n \to \infty}{\lim}x_n.$$ Now, for a non-negative deferred weighted regular matrix $A=(a_{n,k})$ and $K_{\epsilon} \subset \mathbb{N}=\{k\in\mathbb{N}:|x_k-l|\geq \epsilon\}$, a sequence $\big<x_n\big>$ is said to be \textit{deferred weighted $A$-statistically convergent} to $l$ (denoted by $stat_{A}^{DW}-\underset{n \to \infty}{\lim}x_n = l$) if, for each $\epsilon >0$, the deferred weighted $A$-density of $K_{\epsilon}$ i.e. $d^A_{DW}(K_{\epsilon})$ is zero. It means that
\begin{eqnarray*}
d^A_{DW}(K_{\epsilon})= \lim_{n\to \infty}\frac{1}{S_n}\sum_{m=b_n+1}^{c_n}\sum_{k\in K_{\epsilon}}s_ma_{m,k}= 0.
\end{eqnarray*}
We call the sequence $\big<x_n\big>$ to be \textit{deferred weighted A-statistically} convergent to the number $l$ with the rate $o(\gamma_n)$ (please see \cite{Duman1, Duman2}) if
\begin{eqnarray*}
\lim_{n\to \infty}\frac{1}{\gamma_n}\bigg\{\frac{1}{S_n}\sum_{m=b_n+1}^{c_n}\sum_{k\in K_{\epsilon}}s_ma_{m,k}\bigg\}&=& 0.
\end{eqnarray*}
In this case, we write $x_n-l = stat_{A}^{DW}-o(\gamma_n)$.

After having much introduction of the statistical convergence and its various types of generalizations, we shall now introduce the next equally important convergence technique.

\noindent
Suppose that $p(u)=\sum_{n=1}^{\infty}p_nu^{n-1}$ is a power series of non-negative coefficients with radius of convergence $R \in (0, \infty]$. Any sequence $\big<x_n\big>$ is called convergent to a number $l$ by means of  P-summability method (or P-summable/Power series summability)\cite{Bor} if
\begin{eqnarray}\label{e1.1}
\lim_{u \to R^-}\frac{1}{p(u)}\sum_{n=1}^{\infty}x_np_nu^{n-1} = l.
\end{eqnarray}
Additionally, the P-summability method is said to be regular \cite{Bos} iff
\begin{eqnarray}\label{*}
\lim_{u \to R^-}\frac{p_nu^{n-1}}{p(u)} = 0,~ \text{for all}~n \in \mathbb{N}.
\end{eqnarray}
The idea of convergence by the P-summability method is more general in comparison to the classical convergence. One can see this by taking an example of the sequence $\big<x_n\big> = \begin{cases}
1, & \text{n is even}\\
0, & \text{n is odd}
\end{cases}
.$ Clearly, it is not convergent in the usual sense but if we take $p_n = 1,~\forall ~n\in \mathbb{N}$, we have $p(u) = \frac{1}{1-u}$, $|u|<1$, which implies that $R = 1,$ and hence from (\ref{e1.1}) we have
\begin{eqnarray*}
\lim_{u \to 1^-}(1-u)\sum_{n=1}^{\infty}x_nu^{n-1} &=& \lim_{u \to 1 -}\frac{(1-u)}{u}\sum_{n=1}^{\infty}u^{2n} = \dfrac{1}{2},
\end{eqnarray*}
therefore $\big<x_n\big>$ converges to $\frac{1}{2}$ in the sense of P-summability method.\\
Due to this generality of convergence by power series method over the classical one, many authors have made significant contributions in this direction. The interested reader may refer to \cite{Kra, Sta, Tas1, Tas} etc.

\section{General Theorems}

Let $\big(\mathfrak{B}(I_a^b)$,$\norm{.}\big)$ be the normed space of all bounded functions on $I_a^b = [a,b]$ with the sup- norm and $\mathfrak{C}(I_a^b)$ denote the subspace of $\mathfrak{B}(I_a^b)$ of all continuous functions on $I_a^b$. We consider
\begin{eqnarray*}
m &=& \sup_{x\in I_a^b} |x|.
\end{eqnarray*}
In our further consideration, we assume $\norm{\mathfrak{L_n}((s-x)^2)}=\mu_{n}^{(2)}$.\\
In the following theorem, we obtain the necessary and sufficient conditions for the deferred A-weighted statistical convergence of a sequence of positive linear operators.
\begin{theorem}\label{thm1}
Let $\mathfrak{L_n}: \mathfrak{C}(I_a^b) \to \mathfrak{B}(I_a^b)$ be a sequence of positive linear operators and $A=(a_{nk})$ be a non-negative deferred weighted regular matrix. Suppose $\big<a_n\big>$ and $\big<b_n\big>$ be sequences of non-negative integers satisfying the regularity conditions. If $g_i(s) = s^i, for~ i=0,1,2,$ then
\begin{eqnarray*}
stat_{A}^{DW}-\lim_{n}\norm{\mathfrak{L_n}(g_i)-g_i} = 0, \quad ~ \text{for}~ i=0,1,2;
\end{eqnarray*}
if and only if
\begin{eqnarray*}
stat_{A}^{DW}-\lim_{n}\norm{\mathfrak{L_n}(g)-g} = 0, \quad ~ \forall ~ g\in \mathfrak{C}(I_a^b).
\end{eqnarray*}
\end{theorem}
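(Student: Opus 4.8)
The plan is to prove the nontrivial forward implication (that the three test functions $g_0,g_1,g_2$ suffice), since the reverse implication is immediate: each $g_i\in\mathfrak{C}(I_a^b)$, so specializing the general hypothesis to $g=g_i$ recovers the three test-function statements at once.

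For the forward direction, I would first establish the classical Korovkin inequality at the level of norms. Fix $g\in\mathfrak{C}(I_a^b)$ and set $M=\norm{g}$. By uniform continuity on the compact interval $I_a^b$, for each $\epsilon>0$ there is a $\delta>0$ with $|g(s)-g(x)|\leq \epsilon+\frac{2M}{\delta^2}(s-x)^2$ for all $s,x\in I_a^b$. Applying positivity and linearity of $\mathfrak{L}_n$, and expanding
$$\mathfrak{L}_n((s-x)^2;x)=[\mathfrak{L}_n(g_2;x)-g_2(x)]-2x[\mathfrak{L}_n(g_1;x)-g_1(x)]+x^2[\mathfrak{L}_n(g_0;x)-g_0(x)]$$
(using $x^2-2x\cdot x+x^2=0$), I would take the supremum over $x$ and invoke $m=\sup_{x\in I_a^b}|x|$ to obtain a bound of the form
$$\norm{\mathfrak{L}_n(g)-g}\leq \epsilon+K\big(\norm{\mathfrak{L}_n(g_0)-g_0}+\norm{\mathfrak{L}_n(g_1)-g_1}+\norm{\mathfrak{L}_n(g_2)-g_2}\big),$$
where $K=K(\epsilon,M,\delta,m)$ is a constant independent of $n$.

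Next I would convert this deterministic estimate into a statement about deferred weighted $A$-density. Given $\epsilon'>0$, choose $\epsilon=\epsilon'/2$ above; then the index set $\{n:\norm{\mathfrak{L}_n(g)-g}\geq \epsilon'\}$ is contained in $\{n:K\sum_{i}\norm{\mathfrak{L}_n(g_i)-g_i}\geq \epsilon'/2\}$, which in turn lies inside the finite union $\bigcup_{i=0}^{2}\{n:\norm{\mathfrak{L}_n(g_i)-g_i}\geq \epsilon'/(6K)\}$. By hypothesis each of these three sets has $d^A_{DW}(\cdot)=0$. Since $d^A_{DW}$ is built from the nonnegative matrix $A$, it is monotone and subadditive on index sets, so the union — and hence the smaller set — also has deferred weighted $A$-density zero, giving $stat_A^{DW}-\lim_n\norm{\mathfrak{L}_n(g)-g}=0$.

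I expect the main obstacle to be this density-level step rather than the Korovkin inequality itself, which is routine. One must verify carefully that the stated set inclusions are valid and that $d^A_{DW}$ genuinely behaves subadditively under finite unions — equivalently, that the defining expression $\frac{1}{S_n}\sum_{m}\sum_{k\in K_\epsilon}s_m a_{m,k}$ respects the inclusion $K_\epsilon\subset K_1\cup K_2\cup K_3$ in the limit. This is precisely where the deferred weighted structure and the regularity of $A$ enter, in contrast to the purely analytic estimate that precedes it.
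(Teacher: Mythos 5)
Your proposal is correct and follows essentially the same route as the paper's own proof: the classical Korovkin estimate derived from uniform continuity together with the expansion of $\mathfrak{L}_n((s-x)^2;x)$ in the three test functions, followed by the index-set inclusion and the monotonicity and finite subadditivity of the deferred weighted $A$-density, which hold because $s_m a_{m,k}\geq 0$. The only cosmetic differences are that you merge the three coefficients into a single constant $K$ and fix $\epsilon=\epsilon'/2$ with threshold $\epsilon'/(6K)$, whereas the paper keeps the coefficients separate and uses thresholds $(\epsilon'-\epsilon)/3$ for the three sets.
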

\begin{proof}
Let us assume that, for all $g\in \mathfrak{C}(I_a^b)$, we have
\begin{eqnarray*}
stat_{A}^{DW}-\lim_{n}\norm{\mathfrak{L_n}(g)-g} = 0,
\end{eqnarray*}
then
\begin{eqnarray*}
stat_{A}^{DW}-\lim_{n}\norm{\mathfrak{L_n}(g_i)-g_i} = 0,\; for\; i=0,1,2
\end{eqnarray*}
is obvious. Conversely, assume that
\begin{eqnarray*}
stat_{A}^{DW}-\lim_{n}\norm{\mathfrak{L_n}(g_i)-g_i} = 0,\; for\; i=0,1,2.
\end{eqnarray*}
Since $g\in \mathfrak{C}(I_a^b),$ we have
\begin{eqnarray*}
| g(s)-g(x)| &\leq & 2\norm{g}, \quad \forall ~ s,x\in I_a^b.
\end{eqnarray*}
As $g$ is uniformly continuous on $I_a^b$, for a given $\epsilon > 0,$ $\exists$ $\delta >0$ such that $|g(s)-g(x)| < \epsilon$, whenever $|s-x| < \delta$, $s,x\in I_a^b$. If $|s-x| \geq \delta,$ then
\begin{eqnarray*}
|g(s)-g(x)| &\leq & 2\norm{g} \frac{(s-x)^2}{\delta^2}.
\end{eqnarray*}
Hence, we can write
\begin{eqnarray}\label{1}
|g(s)-g(x)| &< & \epsilon + 2\norm{g} \frac{(s-x)^2}{\delta^2} \quad \forall ~ s,x \in I_a^b.
\end{eqnarray}
In view of the monotonicity of $\mathfrak{L_n}$, we get
\begin{eqnarray*}
|\mathfrak{L_n}(g(s);x)-g(x)| \leq \mathfrak{L_n}(|g(s)-g(x)|;x) + |g(x)||\mathfrak{L_n}(1;x)-1|,
\end{eqnarray*}
hence from (\ref{1}), we obtain
\begin{eqnarray*}
|\mathfrak{L_n}(g(s);x)-g(x)| \leq \mathfrak{L_n}\big(\epsilon + 2\norm{g} \frac{(s-x)^2}{\delta^2};x\big) + |g(x)||\mathfrak{L_n}(1;x)-1|.
\end{eqnarray*}
As, we see that
\begin{eqnarray*}
\mathfrak{L_n}(\epsilon ;x) +\frac{2\norm{g}}{\delta^2}  \mathfrak{L_n}\big((s-x)^2;x\big) &=& \epsilon + \epsilon \big( \mathfrak{L_n}(1;x)-1\big) + \frac{2\norm{g}}{\delta^2} \bigg[\big(\mathfrak{L_n}(s^2;x)-x^2\big)\nonumber\\
&&-2x\big(\mathfrak{L_n}(s;x)-x\big)+x^2\big(\mathfrak{L_n}(1;x)-1\big)\bigg],
\end{eqnarray*}
hence
\begin{eqnarray}\label{5}
\norm{\mathfrak{L_n}(g)-g} &\leq & \epsilon + \bigg(\norm{g}+\frac{2\norm{g}m^2}{\delta^2}+\epsilon\bigg)\norm{\mathfrak{L_n}(1)-1} + \frac{2\norm{g}}{\delta^2}\norm{\mathfrak{L_n}(s^2)-x^2}+\frac{4\norm{g}m}{\delta^2}\norm{\mathfrak{L_n}(s)-x}.\nonumber\\
{}
\end{eqnarray}
Now, for any $\epsilon^{'} > 0$, we define the followings sets:
\begin{eqnarray*}
U_1 &=& \{n\in \mathbb{N}: \norm{\mathfrak{L_n}(g)-g} \geq \epsilon^{'}\};\\
U_2 &=& \{n\in \mathbb{N}: \bigg(\norm{g}+\frac{2\norm{g}m^2}{\delta^2}+\epsilon\bigg)\norm{\mathfrak{L_n}(1)-1}\geq \frac{\epsilon^{'}-\epsilon}{3} \};\\
U_3 &=& \{n\in \mathbb{N}: \frac{2\norm{g}}{\delta^2}\norm{\mathfrak{L_n}(s^2)-x^2}\geq \frac{\epsilon^{'}-\epsilon}{3}\};\\
U_4 &=& \{n \in \mathbb{N}: \frac{4\norm{g}m}{\delta^2}\norm{\mathfrak{L_n}(s)-x} \geq \frac{\epsilon^{'}-\epsilon}{3}\},
\end{eqnarray*}
then it is obvious that $U_1\subset U_2 \cup U_3 \cup U_4$ and hence
\begin{eqnarray*}
\frac{1}{S_n}\sum_{m=b_n+1}^{c_n}\sum_{k\in U_1}s_m a_{m,k} &\leq & \frac{1}{S_n}\sum_{m=b_n+1}^{c_n}\sum_{k\in U_2}s_m a_{m,k} + \frac{1}{S_n}\sum_{m=b_n+1}^{c_n}\sum_{k\in U_3}s_m a_{m,k}+ \frac{1}{S_n}\sum_{m=b_n+1}^{c_n}\sum_{k\in U_4}s_m a_{m,k}.
\end{eqnarray*}
In view of our hypothesis, we have
\begin{eqnarray*}
\lim_{n \to \infty} \frac{1}{S_n}\sum_{m=b_n+1}^{c_n}\sum_{k\in U_i}s_m a_{m,k} = 0,
\end{eqnarray*}
for $i = 2,3$ and $4$. Therefore
\begin{eqnarray*}
\lim_{n \to \infty} \frac{1}{S_n}\sum_{m=b_n+1}^{c_n}\sum_{k\in U_1}s_m a_{m,k} =  stat_{A}^{DW}-\lim_{n}\norm{L_n(g)-g} = 0.
\end{eqnarray*}
This completes the proof of the theorem.
\end{proof}
Our next theorem determines the rate of deferred weighted A-statistical convergence by the operators $\mathfrak{L_n}(g)~ to~ g$, for all $g\in \mathfrak{C}(I_a^b)$. We recall the definition of modulus of continuity. For any continuous function $g: I_a^b \to \mathbb{R}$ and a given $\delta > 0$, the modulus of continuity $\omega(g;\delta)$ is defined as
\begin{eqnarray*}
w(g;\delta) &:=& \sup_{|s-x|\leq \delta}\{|g(s)-g(x)|: s,x \in I_a^b\}.
\end{eqnarray*} 
\begin{theorem}
Let the matrix $A$ and the sequences $\big<a_n\big>$ , $\big<b_n\big>$ be as defined in Theorem \ref{thm1}. Further, let  $\big<\gamma_{n}\big>$ be a positive non-increasing sequence of real numbers. For $g\in \mathfrak{C}(I_a^b)$, if
\begin{eqnarray}\label{e2.3}
\{\norm{g}+\omega(g;\sqrt{\mu_{n}^{(2)}})\}\norm{\mathfrak{L_n}(1)-1} + 2~ \omega(g;\sqrt{\mu_{n}^{(2)}}) &=& stat_{A}^{DW}- o(\gamma_n),~as~n\rightarrow\infty,
\end{eqnarray}
then
\begin{eqnarray*}
\norm{\mathfrak{L_n}(g)-g} & = & stat_{A}^{DW}-o(\gamma_n),~as~n\rightarrow\infty.
\end{eqnarray*}
\end{theorem}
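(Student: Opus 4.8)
The plan is to first establish the pointwise quantitative Korovkin estimate for $\mathfrak{L_n}$ and then transfer the prescribed rate $o(\gamma_n)$ from the controlling sequence appearing in (\ref{e2.3}) to $\norm{\mathfrak{L_n}(g)-g}$ by a density-comparison argument. First I would start, exactly as in the proof of Theorem \ref{thm1}, from the monotonicity of $\mathfrak{L_n}$ and the elementary bound
\begin{eqnarray*}
|\mathfrak{L_n}(g(s);x)-g(x)| \leq \mathfrak{L_n}(|g(s)-g(x)|;x) + |g(x)|\,|\mathfrak{L_n}(1;x)-1|.
\end{eqnarray*}
In place of the $\epsilon$-$\delta$ splitting used there, I would invoke the modulus-of-continuity inequality
\begin{eqnarray*}
|g(s)-g(x)| \leq \omega(g;\delta)\left(1+\frac{(s-x)^2}{\delta^2}\right), \qquad s,x\in I_a^b,
\end{eqnarray*}
valid for every $\delta>0$, and apply the positive linear operator $\mathfrak{L_n}$ term by term to obtain
\begin{eqnarray*}
\mathfrak{L_n}(|g(s)-g(x)|;x) \leq \omega(g;\delta)\left(\mathfrak{L_n}(1;x)+\frac{1}{\delta^2}\mathfrak{L_n}((s-x)^2;x)\right).
\end{eqnarray*}

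The decisive step is the calibrated choice $\delta=\sqrt{\mu_n^{(2)}}$, with $\mu_n^{(2)}=\norm{\mathfrak{L_n}((s-x)^2)}$. For this $\delta$ one has $\frac{1}{\delta^2}\mathfrak{L_n}((s-x)^2;x)\leq 1$ for every $x\in I_a^b$, since $\mathfrak{L_n}((s-x)^2;x)\leq\mu_n^{(2)}$ by definition of the sup-norm. Writing $\mathfrak{L_n}(1;x)=1+(\mathfrak{L_n}(1;x)-1)$ and passing to the sup-norm over $x$, I would then collect the terms to arrive at precisely
\begin{eqnarray*}
\norm{\mathfrak{L_n}(g)-g} \leq \{\norm{g}+\omega(g;\sqrt{\mu_n^{(2)}})\}\,\norm{\mathfrak{L_n}(1)-1} + 2\,\omega(g;\sqrt{\mu_n^{(2)}}) =: E_n,
\end{eqnarray*}
so that the controlling sequence $E_n$ in (\ref{e2.3}) dominates the quantity of interest for every $n$.

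Finally, I would transfer the rate by a set-inclusion argument. Fix $\epsilon>0$ and set $V_\epsilon=\{k\in\mathbb{N}:\norm{\mathfrak{L_k}(g)-g}\geq\epsilon\}$ and $W_\epsilon=\{k\in\mathbb{N}:E_k\geq\epsilon\}$. The inequality $\norm{\mathfrak{L_k}(g)-g}\leq E_k$ forces $V_\epsilon\subseteq W_\epsilon$, and because the entries $s_m a_{m,k}$ are non-negative this yields, for each $n$,
\begin{eqnarray*}
\frac{1}{\gamma_n}\left\{\frac{1}{S_n}\sum_{m=b_n+1}^{c_n}\sum_{k\in V_\epsilon}s_m a_{m,k}\right\} \leq \frac{1}{\gamma_n}\left\{\frac{1}{S_n}\sum_{m=b_n+1}^{c_n}\sum_{k\in W_\epsilon}s_m a_{m,k}\right\}.
\end{eqnarray*}
The right-hand side tends to $0$ as $n\to\infty$ by the hypothesis $E_n=stat_A^{DW}-o(\gamma_n)$ from (\ref{e2.3}), while the left-hand side is non-negative; hence it too tends to $0$. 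Since $\epsilon>0$ was arbitrary, this is exactly the assertion $\norm{\mathfrak{L_n}(g)-g}=stat_A^{DW}-o(\gamma_n)$.

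The routine parts are the Korovkin estimate and the monotonicity of the density under the inclusion $V_\epsilon\subseteq W_\epsilon$; the step that requires genuine care is matching the estimate term-for-term with the exact expression in (\ref{e2.3}) through the calibration $\delta=\sqrt{\mu_n^{(2)}}$ (tacitly assuming $\mu_n^{(2)}>0$), because any discrepancy in the constants or in the $\omega(g;\sqrt{\mu_n^{(2)}})$ factors would obstruct the clean transfer of the $o(\gamma_n)$ rate.
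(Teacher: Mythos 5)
Your proposal is correct and follows essentially the same route as the paper's proof: the modulus-of-continuity inequality $|g(s)-g(x)|\leq\big(1+\frac{(s-x)^2}{\delta^2}\big)\omega(g;\delta)$, positivity of $\mathfrak{L_n}$, and the calibration $\delta=\sqrt{\mu_n^{(2)}}$ yielding exactly the bound $\{\norm{g}+\omega(g;\sqrt{\mu_n^{(2)}})\}\norm{\mathfrak{L_n}(1)-1}+2\,\omega(g;\sqrt{\mu_n^{(2)}})$. Your only addition is that you spell out the final rate-transfer via the inclusion $V_\epsilon\subseteq W_\epsilon$ and non-negativity of the weights $s_m a_{m,k}$, a step the paper compresses into ``taking into account our hypothesis''; this makes the argument more complete, not different.
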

\begin{proof}
For any $g \in \mathfrak{C}(I_a^b)$, it is well known that
\begin{eqnarray*}
|g(s)-g(x)| & \leq & \big{\lbrace}1+\frac{(s-x)^2}{\delta^2}\big{\rbrace}\omega(g;\delta) \quad \delta >0.
\end{eqnarray*}
Hence, by the monotonocity and linearity of operators $\mathfrak{L_n}$, we get
\begin{eqnarray*}
|\mathfrak{L_n}(g;x)-g(x)| &\leq & |g(x)||\mathfrak{L_n}(1;x)-1| + \omega(g;\delta) + \omega(g;\delta)|\mathfrak{L_n}(1;x)-1|\\
&&+~ \frac{\omega(g;\delta)}{\delta^2} \mathfrak{L_n}((s-x)^2;x),
\end{eqnarray*}
which implies that
\begin{eqnarray}\label{24}
\norm{\mathfrak{L_n}(g)-g} &\leq & \{\norm{g}+\omega(g;\delta)\}\norm{\mathfrak{L_n}(1)-1} + \omega(g;\delta) + \frac{\omega(g;\delta)}{\delta^2}\norm{\mathfrak{L_n}((s-x)^2)}.
\end{eqnarray}
Taking $\delta = \sqrt{\norm{\mathfrak{L_n}((s-x)^2)}}=\sqrt{\mu_{n}^{(2)}}$, we get
\begin{eqnarray*}
\norm{\mathfrak{L_n}(g)-g} &\leq & \{\norm{g}+\omega(g;\sqrt{\mu_{n}^{(2)}})\}\norm{\mathfrak{L_n}(1)-1} + 2~ \omega(g;\sqrt{\mu_{n}^{(2)}}).
\end{eqnarray*}
Taking into account our hypothesis (\ref{e2.3}), the proof of the theorem is completed.
\end{proof}
In the following result we obtain a Korovkin type approximation theorem for the operators $\mathfrak{L_n}$ using P- summability method.
\begin{theorem}\label{thm3}
For $g\in \mathfrak{C}(I_a^b)$, the sequence $\big<\mathfrak{L_n}\big>$ satisfies
\begin{eqnarray}\label{6}
\lim_{u \to R^-}\frac{1}{p(u)}\sum_{n=1}^{\infty}\norm{\mathfrak{L_n}(g)-g}p_nu^{n-1} = 0,
\end{eqnarray}
iff
\begin{eqnarray}\label{7}
\lim_{u \to R^-}\frac{1}{p(u)}\sum_{n=1}^{\infty}\norm{\mathfrak{L_n}(s^i)-x^i}p_nu^{n-1} = 0,
\end{eqnarray}
for i = 0,1,2.
\end{theorem}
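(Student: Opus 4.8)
The plan is to transplant the argument of Theorem \ref{thm1} from the deferred weighted $A$-statistical setting into the power-series framework, replacing the density average by the operator $g \mapsto \frac{1}{p(u)}\sum_{n=1}^{\infty}(\cdot)\,p_nu^{n-1}$ and taking $u \to R^-$. The direct implication is immediate: if (\ref{6}) holds for every $g \in \mathfrak{C}(I_a^b)$, one specializes to the test functions $g = s^i$, $i=0,1,2$, which themselves belong to $\mathfrak{C}(I_a^b)$, and (\ref{7}) follows at once.

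For the converse I would start from the pointwise Korovkin estimate (\ref{5}) derived in the proof of Theorem \ref{thm1}; that inequality is purely analytic and holds for each fixed $n$, regardless of the summability method. Thus, given $\epsilon > 0$ together with the corresponding $\delta > 0$, for every $n \in \mathbb{N}$ we have
\begin{eqnarray*}
\norm{\mathfrak{L_n}(g)-g} &\leq& \epsilon + C_0\,\norm{\mathfrak{L_n}(1)-1} + C_2\,\norm{\mathfrak{L_n}(s^2)-x^2} + C_1\,\norm{\mathfrak{L_n}(s)-x},
\end{eqnarray*}
where $C_0 = \norm{g}+\frac{2\norm{g}m^2}{\delta^2}+\epsilon$, $C_2 = \frac{2\norm{g}}{\delta^2}$ and $C_1 = \frac{4\norm{g}m}{\delta^2}$ depend only on $g$, $m$, $\delta$ and $\epsilon$. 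Multiplying through by the non-negative weight $\frac{p_nu^{n-1}}{p(u)}$ and summing over $n$ is the heart of the argument.

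The decisive ingredient is the normalization $\frac{1}{p(u)}\sum_{n=1}^{\infty}p_nu^{n-1}=1$, which converts the constant term into exactly $\epsilon$, while linearity turns the remaining three terms into $C_0$, $C_2$ and $C_1$ times the power-series means of $\norm{\mathfrak{L_n}(1)-1}$, $\norm{\mathfrak{L_n}(s^2)-x^2}$ and $\norm{\mathfrak{L_n}(s)-x}$. Invoking the hypothesis (\ref{7}) for $i=0,1,2$, each of these three means vanishes as $u \to R^-$, so that
\begin{eqnarray*}
\limsup_{u\to R^-}\frac{1}{p(u)}\sum_{n=1}^{\infty}\norm{\mathfrak{L_n}(g)-g}\,p_nu^{n-1} &\leq& \epsilon.
\end{eqnarray*}
As $\epsilon > 0$ is arbitrary and the summands are non-negative, the $\limsup$ must be $0$, which establishes (\ref{6}).

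The one point demanding care is the legitimacy of the term-by-term summation: one must know that $\sum_{n=1}^{\infty}\norm{\mathfrak{L_n}(g)-g}\,p_nu^{n-1}$ actually converges for $u < R$ before the weights are distributed. This is supplied by the displayed bound itself, since (\ref{7}) forces each majorant $\sum_{n=1}^{\infty}\norm{\mathfrak{L_n}(s^i)-x^i}\,p_nu^{n-1}$ to be finite near $R$, and a comparison then controls the left-hand series. I expect this convergence bookkeeping, rather than any deep idea, to be the only genuine obstacle; once it is in place the proof reduces to a routine weighted average of inequality (\ref{5}).
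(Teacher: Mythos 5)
Your proposal is correct and follows essentially the same route as the paper: the forward direction by specializing to the test functions $g_i(s)=s^i$, and the converse by weighting inequality (\ref{5}) with $\frac{p_nu^{n-1}}{p(u)}$, summing, invoking the hypothesis (\ref{7}) for $i=0,1,2$, and using the arbitrariness of $\epsilon$. Your use of $\limsup$ and the explicit comparison argument guaranteeing convergence of $\sum_{n=1}^{\infty}\norm{\mathfrak{L_n}(g)-g}p_nu^{n-1}$ merely make rigorous two points the paper passes over silently, but the underlying argument is identical.
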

\begin{proof}
It is easy to see that the equation $(\ref{6}) \implies (\ref{7})$, thus we shall prove the converse. Assume that the condition (\ref{7}) is true. Using (\ref{5}), we can write
\begin{eqnarray*}
\lim_{u \to R^-}\frac{1}{p(u)}\sum_{n=1}^{\infty}\norm{\mathfrak{L_n}(g)-g}p_nu^{n-1} &\leq & \epsilon + \bigg(\norm{g}+\frac{2\norm{g}m^2}{\delta^2}+\epsilon\bigg) \lim_{u \to R^-}\frac{1}{p(u)}\sum_{n=1}^{\infty}\norm{\mathfrak{L_n}(1)-1}c_nu^{n-1}\\
&&+~ \frac{2\norm{g}}{\delta^2}\lim_{u \to R^-}\frac{1}{p(u)}\sum_{n=1}^{\infty}\norm{\mathfrak{L_n}(s^2)-x^2}p_nu^{n-1}\\
&&+~ \frac{4\norm{g}m}{\delta^2}\lim_{u \to R^-}\frac{1}{p(u)}\sum_{n=1}^{\infty}\norm{\mathfrak{L_n}(s)-x}c_nu^{n-1}.
\end{eqnarray*}
Hence, in view of the hypothesis and the arbitrariness of $\epsilon$, we have the desired result.
\end{proof}
  
\begin{theorem}
If $g\in \mathfrak{C}(I_a^b)$ and $\phi(u)$ is some positive function on $(0,R)$ such that
\begin{eqnarray*}
\frac{1}{p(u)}\sum_{n=1}^{\infty}\{\norm{g}+\omega(g;\sqrt{\mu_{n}^{(2)}})\}\norm{\mathfrak{L_n}(1)-1}p_nu^{n-1}+\frac{2}{p(u)}\sum_{n=1}^{\infty}\omega(g;\sqrt{\mu_{n}^{(2)}}) p_nu^{n-1} = O(\phi(u)), \quad \text{as $u\to R^{-}$}.
\end{eqnarray*}
Then,
\begin{eqnarray*}
\frac{1}{p(u)}\sum_{n=1}^{\infty}\norm{\mathfrak{L_n}(g)-g} p_nu^{n-1} = O(\phi(u)), \quad \text{as $u\to R^{-}$}.
\end{eqnarray*}
\end{theorem}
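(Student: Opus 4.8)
The plan is to reuse the pointwise operator estimate already established in the proof of the preceding rate theorem and then carry it through the power-series kernel. After the substitution $\delta=\sqrt{\mu_n^{(2)}}$ was made in inequality (\ref{24}), we arrived at the bound
\[
\norm{\mathfrak{L_n}(g)-g} \leq \{\norm{g}+\omega(g;\sqrt{\mu_{n}^{(2)}})\}\norm{\mathfrak{L_n}(1)-1} + 2\,\omega(g;\sqrt{\mu_{n}^{(2)}}),
\]
which holds for every $n\in\mathbb{N}$. This is the sole analytic ingredient; the remainder of the argument is purely a matter of summability bookkeeping, entirely parallel to the passage from (\ref{5}) to the conclusion in Theorem \ref{thm3}.

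First I would multiply both sides of the displayed inequality by the weight $\dfrac{p_n u^{n-1}}{p(u)}$, which is non-negative for $u\in(0,R)$ because $p_n\geq 0$ and $p(u)>0$ on that interval. Since every term on each side is non-negative, the term-by-term inequality is preserved upon summing over $n$, yielding
\[
\frac{1}{p(u)}\sum_{n=1}^{\infty}\norm{\mathfrak{L_n}(g)-g}\,p_nu^{n-1} \leq \frac{1}{p(u)}\sum_{n=1}^{\infty}\{\norm{g}+\omega(g;\sqrt{\mu_{n}^{(2)}})\}\norm{\mathfrak{L_n}(1)-1}p_nu^{n-1}+\frac{2}{p(u)}\sum_{n=1}^{\infty}\omega(g;\sqrt{\mu_{n}^{(2)}})\,p_nu^{n-1}.
\]

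Finally I would invoke the hypothesis, which asserts exactly that the right-hand side above is $O(\phi(u))$ as $u\to R^{-}$. Because the left-hand side is non-negative and bounded above by a quantity that is $O(\phi(u))$, it too is $O(\phi(u))$, which is the desired conclusion. No interchange of limits is required, since the asymptotic as $u\to R^{-}$ is applied to both sides at once; the only point meriting a word of care—and it is a mild one—is the preservation of the inequality under summation of non-negative series, which is immediate. Thus there is no genuine obstacle: the result is a routine transcription of the modulus-of-continuity estimate into the language of power-series summability.
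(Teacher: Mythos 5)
Your proposal is correct and follows essentially the same route as the paper: both start from inequality (\ref{24}), multiply by the non-negative kernel $p_nu^{n-1}/p(u)$, sum term by term, and invoke the hypothesis with the choice $\delta=\sqrt{\mu_n^{(2)}}$. The only cosmetic difference is that you substitute $\delta=\sqrt{\mu_n^{(2)}}$ in the pointwise bound before summing, whereas the paper carries a generic $\delta$ under the sum and specifies it afterward---since $\delta$ depends on $n$, your ordering is if anything the tidier presentation of the identical argument.
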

\begin{proof}
From the inequality (\ref{24}), we see that
\begin{eqnarray*}
\frac{1}{p(u)}\sum_{n=1}^{\infty}\norm{\mathfrak{L_n}(g)-g} p_nu^{n-1} &\leq & \frac{1}{p(u)}\sum_{n=1}^{\infty}\{\norm{g}+\omega(g;\delta)\}\norm{\mathfrak{L_n}(1)-1}p_nu^{n-1} \\
&&+~ \frac{1}{p(u)}\sum_{n=1}^{\infty}\omega(g;\delta)\bigg( 1+ \frac{1}{\delta^2}\mu_{n}^{(2)}\bigg)p_nu^{n-1}.
\end{eqnarray*}
Now, choosing $\delta=\sqrt{\mu_{n}^{(2)}}$ and considering the hypothesis, we reach the required result.
\end{proof}

\section{Construction of the $q$-Lagrange-Hermite operators}
The past few decades have been witness to the multivariate extension of many well-known orthogonal polynomials, some notably mentioned works may be refer to \cite{Altin3, Ekta, Altin}. Very recently, researchers have started to made efforts to construct sequence of linear positive operators (LPOs) using the multivariate polynomials. In this row, firstly, for $g \in \mathfrak{C}(I_0^1)$, Erku\c{s} et al. \cite{Erkus} have defined a sequence of LPO using multivariate Lagrange polynomials and studied its approximation behavior by means statistical convergence method.  Subsequently, Erku\c{s}-Duman et al. \cite{Erkus-Dum1} proposed and studied a Kantorovich version of the LPO. Furthermore, using the multivariate $q$-Lagrange polynomials defined in \cite{Altin}, Erku\c{s}-Duman \cite{Erkus-Dum2} constructed a $q$-(basic)analogue of the operator defined in \cite{Erkus}. With time, unknowingly, Mursaleen et al. \cite{Mursalen} independently attempted to define a $q$-(basic) analogue of the operator defined in \cite{Erkus}, while Behar et al. \cite{Bax} proposed a slight modification in the attempted operator and extended the study to the bi-variate and GBS (Generalized Boolean Sum) cases.

In order to explain the construction of $q$-Lagrange-Hermite LPO, we shall need to recall some important definitions from the quantum calculus (widely known as q-calculus). Let us assume that $|q| <1$ then we recall the $q$-(basic) analogues of a natural number and Pochhammer symbol (please see \cite{Treatise}) as
\begin{eqnarray*}
[n]_q := \frac{1-q^n}{1-q}; \quad \text{and} \quad (\rho;q)_n=
\begin{array}{cc}
  \bigg\{\begin{array}{cc}
  1 ,& \text{if} \quad n=0 ,\\
  (1-\rho)(1-\rho q)...(1-\rho q^{n-1}),& \text{if} \quad n\in \mathbb{N}. \\
    \end{array}
\end{array},
\end{eqnarray*}
respectively, where $\rho$ is some arbitrary parameter.

Inspired by the multivariate extension of Lagrange polynomials \cite{Chan}, Altin et al. \cite{Altin2} introduced the multivariate Lagrange-Hermite polynomials $h_p^{(\beta^{(1)},\beta^{(2)},\dotsb~,\beta^{(r)})}(z_1, z_2,...z_n)$ generated by the expression
\begin{eqnarray*}
\prod\limits_{i=1}^{r}\left(1-t^i z_i\right)^{-\beta^{(i)}}=\sum\limits_{p=0}^{\infty}h_p^{(\beta^{(1)},\beta^{(2)},\dotsb~\beta^{(r)})}(z_1,z_2,\dotsb~z_r)t^p,
\end{eqnarray*}
with $|t|<\min\{|z_1|^{-1},|z_1|^{-1/2},\dotsb~|z_r|^{-1/r}\}$. Recently, Erkus \cite{Erkus2} proposed a $q$-(basic) analogue of the above Lagrange-Hermite polynomial generated by
\begin{eqnarray}\label{8}
\prod\limits_{i=1}^{r}\frac{1}{\big(z_it^i;q\big)_{\beta^{(i)}}} = \sum\limits_{p=0}^{\infty}h_{p,q}^{(\beta^{(1)},\beta^{(2)},\dotsb~\beta^{(r)})}(z_1,z_2,\dotsb~z_r)t^p.
\end{eqnarray}
From (\ref{8}), it is easy to obtain the explicit form of the polynomial $h_{p,q}^{(\beta^{(1)},\beta^{(2)},\dotsb~\beta^{(r)})}(z_1,z_2,\dotsb~z_r)$ as
\begin{eqnarray*}\label{9}
h_{p,q}^{(\beta^{(1)},\beta^{(2)},\dotsb~\beta^{(r)})}(z_1,z_2,\dotsb~z_r) = \sum\limits_{l_1+2l_2+\dotsb+rl_r=p}\left\{\prod\limits_{k=1}^{r}\left(q^{\beta^{(k)}},q\right)_{l_k}\frac{(z_k)^{l_k}}{\left(q,q\right)_{l_k}}\right\}.
\end{eqnarray*}
Hence, using (\ref{8}), we have the identity
\begin{eqnarray}\label{10}
\prod\limits_{i=1}^{r}\big(z_it^i;q\big)_{\beta^{(i)}}\sum\limits_{p=0}^{\infty}\bigg(\sum\limits_{l_1+2l_2+\dotsb+rl_r=p}\left\{\prod\limits_{k=1}^{r}\left(q^{\beta^{(k)}},q\right)_{l_k}\frac{(z_k)^{l_k}}{\left(q,q\right)_{l_k}}\right\}\bigg)t^p = 1.
\end{eqnarray}
Now motivated by the above studies, for $g \in \mathfrak{C}(I_0^1)$, we propose the following sequence of Lagrange-Hermite type LPO:
\begin{eqnarray}\label{11}
\mathfrak{R}_{n,q}^{\alpha^{(1)},\dotsb,\alpha^{(r)}}(g;x) &=& \left\{\prod\limits_{i=1}^{r}\big(\alpha_i x^i;q\big)_{n}\right\}
\sum\limits_{p=0}^{\infty}\bigg\{\sum\limits_{l_1+2l_2+\dotsb+rl_r=p}\left\{\prod\limits_{k=1}^{r}\left(q^{n},q\right)_{l_k}\frac{(\alpha_{n}^{(k)})^{l_k}}{\left(q,q\right)_{l_k}}\right\}\nonumber\\
&&g\left(\frac{[l_{1}]_{q}}{[n+l_{1}-1]_{q}}\right)\bigg\}x^p,
\end{eqnarray}
where $\alpha^{(j)}=\big<\alpha_n^{(j)}\in (0,1)\big>_{n\in \mathbb{N}} $ are sequences of real numbers.\\
In the following lemmas, we obtain estimates for some raw moments of the operators defined by (\ref{11}).

\begin{lemma}\label{lem1}
The operators $\mathfrak{R}_{n,q}^{\alpha^{(1)},\dotsb,\alpha^{(r)}}(.;x)$ satisfy
\begin{eqnarray*}
\mathfrak{R}_{n,q}^{\alpha^{(1)},\dotsb,\alpha^{(r)}}(1;x)=1, \quad \text{for all}~ x\in I_0^1.
\end{eqnarray*}
\end{lemma}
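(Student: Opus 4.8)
The plan is to substitute $g \equiv 1$ into the defining formula (\ref{11}) and to recognise the resulting double series as a direct specialisation of the identity (\ref{10}). Putting $g(s)=1$ removes the sampling factor $g\big(\tfrac{[l_1]_q}{[n+l_1-1]_q}\big)$ from (\ref{11}), so that
\begin{eqnarray*}
\mathfrak{R}_{n,q}^{\alpha^{(1)},\dotsb,\alpha^{(r)}}(1;x) = \left\{\prod_{i=1}^{r}\big(\alpha_n^{(i)} x^i;q\big)_{n}\right\}\sum_{p=0}^{\infty}\bigg(\sum_{l_1+2l_2+\dotsb+rl_r=p}\prod_{k=1}^{r}\big(q^{n},q\big)_{l_k}\frac{(\alpha_n^{(k)})^{l_k}}{(q,q)_{l_k}}\bigg)x^p.
\end{eqnarray*}

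First I would note that this is precisely the left-hand side of (\ref{10}) under the substitution $z_k\mapsto \alpha_n^{(k)}$, $t\mapsto x$, and $\beta^{(k)}\mapsto n$ for each $k=1,\dotsb,r$. Granting that the substitution is admissible, (\ref{10}) then forces the whole expression to collapse to $1$, which is exactly the claim of the lemma. In effect the prefactor $\prod_i(\alpha_n^{(i)}x^i;q)_n$ is the reciprocal of the generating function (\ref{8}) evaluated at the same parameters, so the product of the two telescopes to unity.

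The single point requiring care is convergence, since the generating relation (\ref{8}), and hence (\ref{10}), holds only on the disc $|t|<\min\{|z_1|^{-1},|z_2|^{-1/2},\dotsb,|z_r|^{-1/r}\}$. I would verify that this region contains all of $I_0^1$: because $\alpha_n^{(k)}\in(0,1)$, each radius $(\alpha_n^{(k)})^{-1/k}$ strictly exceeds $1$, whereas $t=x$ ranges only over $[0,1]$; thus $x<\min_k(\alpha_n^{(k)})^{-1/k}$ for every $x\in I_0^1$, and the series converges absolutely there. This legitimises the term-by-term rearrangement and the appeal to (\ref{10}).

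I do not anticipate any real difficulty: once convergence is secured the lemma is an immediate specialisation of (\ref{10}). The only mild subtlety is the notational matching between the prefactor written as $\prod_i(\alpha_i x^i;q)_n$ in (\ref{11}) and the coefficients $\alpha_n^{(k)}$ appearing in the summand, which I read as one and the same sequence $\alpha_n^{(i)}$; I would make this identification explicit at the very start so that all parameters line up cleanly with those of (\ref{10}).
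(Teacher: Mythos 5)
Your proposal is correct and takes essentially the same route as the paper, whose proof of Lemma \ref{lem1} consists of the single remark that the claim follows from (\ref{10}) with details omitted. Your substitution $z_k\mapsto\alpha_n^{(k)}$, $t\mapsto x$, $\beta^{(k)}\mapsto n$, the identification of $\alpha_i$ in the prefactor of (\ref{11}) with $\alpha_n^{(i)}$, and the check that every $x\in I_0^1$ satisfies $x<\min_k\big(\alpha_n^{(k)}\big)^{-1/k}$ (so the appeal to (\ref{10}) is legitimate) supply precisely the details the paper declares straightforward.
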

\begin{proof}
Using (\ref{10}), proof of the lemma is straight-forward hence the details are omitted.
\end{proof}

\begin{lemma}\label{lem2}
For the operators $\mathfrak{R}_{n,q}^{\alpha^{(1)},\dotsb,\alpha^{(r)}}(.;x)$, we have
\begin{eqnarray*}
\mathfrak{R}_{n,q}^{\alpha^{(1)},\dotsb,\alpha^{(r)}}(s;x)=x \alpha_{n}^{(1)}.
\end{eqnarray*}
\end{lemma}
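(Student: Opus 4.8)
The plan is to compute $\mathfrak{R}_{n,q}^{\alpha^{(1)},\dotsb,\alpha^{(r)}}(s;x)$ directly from the definition~(\ref{11}) by substituting $g(s)=s$ and evaluating the resulting double sum. Setting $g\big(\tfrac{[l_1]_q}{[n+l_1-1]_q}\big)=\tfrac{[l_1]_q}{[n+l_1-1]_q}$, I would first isolate this factor and absorb it into the coefficient $(q^n,q)_{l_1}/(q,q)_{l_1}$ attached to the first variable. The key arithmetic simplification I expect to exploit is a telescoping/index-shift identity: writing out $[l_1]_q = \tfrac{1-q^{l_1}}{1-q}$ and recognizing that multiplying $(q^n,q)_{l_1}/(q,q)_{l_1}$ by $[l_1]_q/[n+l_1-1]_q$ should collapse into a shifted $q$-binomial coefficient, effectively reducing $l_1 \mapsto l_1-1$ while pulling out one factor of the parameter. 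This is the step where the specific structure of the Pochhammer symbols does the work, and it is the natural analogue of how $k\binom{n}{k}=n\binom{n-1}{k-1}$ operates in the classical Lagrange-operator moment computations referenced in \cite{Erkus, Mursalen, Bax}.

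After performing this index shift, the aim is to recognize that the remaining sum is again of the form~(\ref{10}) — that is, the same generating identity used to prove Lemma~\ref{lem1} — but now with the parameter $\alpha_n^{(1)}$ pulled out front and one overall power of $x$ factored out (since the shift $l_1\mapsto l_1-1$ reduces the total degree $p=l_1+2l_2+\dotsb+rl_r$ by one). First I would carry out the shift carefully to confirm that the leftover Pochhammer products recombine into precisely the normalizing product $\prod_{i=1}^r(\alpha_i x^i;q)_n$ from Lemma~\ref{lem1}, which evaluates the inner sum to $1$. The residual factor should then be exactly $x\,\alpha_n^{(1)}$, giving the claimed identity.

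The main obstacle I anticipate is handling the mismatch between the exponent $[n+l_1-1]_q$ in the denominator of the node and the shift that is natural for $(q^n,q)_{l_1}$; one must verify that $[l_1]_q/[n+l_1-1]_q$ combines with $(q^n,q)_{l_1}/(q,q)_{l_1}$ to yield a clean $(q^n,q)_{l_1-1}/(q,q)_{l_1-1}$ (up to the extracted constant) rather than leaving a stray $q$-factor behind. I would treat this by writing $[n+l_1-1]_q$ in terms of the appropriate $q$-shifted quantities and checking the cancellation termwise for the first few $l_1$ to fix the correct constant, then argue the general step by induction or by direct manipulation of the ratio $\tfrac{(1-q^{l_1})}{(1-q^{n+l_1-1})}$. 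Once this single cancellation is pinned down, the rest is a routine appeal to the identity~(\ref{10}) exactly as in Lemma~\ref{lem1}, and the details can be compressed.
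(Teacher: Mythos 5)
Your proposal is correct and follows essentially the same route as the paper: the paper's proof also substitutes $g(s)=s$ into the definition~(\ref{11}) and applies exactly the two identities $\frac{[l_1]_q}{(q;q)_{l_1}}=\frac{1}{(1-q)(q;q)_{l_1-1}}$ and $\frac{(q^{n};q)_{l_1}}{[n+l_1-1]_q}=(1-q)(q^{n};q)_{l_1-1}$, so the shift $l_1\mapsto l_1-1$ pulls out $x\,\alpha_n^{(1)}$ and the remaining series collapses to the reciprocal of the prefactor via~(\ref{10}). The ``stray $q$-factor'' you worried about does not arise, since the two factors of $(1-q)$ cancel exactly and the combined ratio is cleanly $(q^{n};q)_{l_1-1}/(q;q)_{l_1-1}$, with no induction needed.
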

\begin{proof}
From the definition (\ref{11}) of the Lagrange-Hermite operator, we can write
\begin{eqnarray*}
\mathfrak{R}_{n,q}^{\alpha^{(1)},\dotsb,\alpha^{(r)}}(s;x) &=& \left\{\prod\limits_{i=1}^{r}\big(\alpha_i x^i;q\big)_{n}\right\}
\sum\limits_{p=1}^{\infty}\bigg\{\underset{l_r\geq 1}{\sum\limits_{l_1+2l_2+\dotsb+rl_r=p}}\left\{\prod\limits_{k=1}^{r}\left(q^{n},q\right)_{l_k}\frac{(\alpha_{n}^{(k)})^{l_k}}{\left(q,q\right)_{l_k}}\right\}\nonumber\\
&&\left(\frac{[l_{1}]_{q}}{[n+l_{1}-1]_{q}}\right)\bigg\}x^p.
\end{eqnarray*}
Using some elementary transformations, $\frac{[l_1]_{q}}{(q;q)_{l_1}}=\frac{1}{(1-q)(q;q)_{l_1-1}}$ and $\frac{(q^{n};q)_{l_1}}{[n+l_1-1]_{q}}=(1-q)(q^{n};q)_{l_1-1}$, we have
\begin{eqnarray*}
\mathfrak{R}_{n,q}^{\alpha^{(1)},\dotsb,\alpha^{(r)}}(s;x) &=& x \alpha_{n}^{(1)} \left\{\prod\limits_{i=1}^{r}\big(\alpha_i x^i;q\big)_{n}\right\}
\sum\limits_{p=1}^{\infty}\bigg\{\underset{l_r\geq 1}{\sum\limits_{l_1+2l_2+\dotsb+rl_r-1=p-1}}(q^{n};q)_{l_1-1}\dotsb(q^{n};q)_{l_r}\\
&& \frac{(\alpha_n^{(1)})^{l_1-1}\dotsb(\alpha_n^{(r)})^{l_r}}{(q;q)_{l_1-1}\dotsb(q;q)_{l_r}}\bigg\}x^{p-1}\\
&=& x \alpha_{n}^{(1)} \left\{\prod\limits_{i=1}^{r}\big(\alpha_i x^i;q\big)_{n}\right\}\sum\limits_{p=1}^{\infty}h^{(n,\dotsb,n)}_{p-1,q}(\alpha_n^{(1)},\alpha_n^{(2)},\dotsb, \alpha_n^{(r)})x^{p-1}\\
&=&x \alpha_{n}^{(1)} \left\{\prod\limits_{i=1}^{r}\big(\alpha_i x^i;q\big)_{n}\right\}\sum\limits_{p=0}^{\infty}h^{(n,\dotsb,n)}_{p,q}(\alpha_n^{(1)},\alpha_n^{(2)},\dotsb, \alpha_n^{(r)})x^{p}\\
&=& x \alpha_{n}^{(1)}, \quad  \text{in view of (\ref{10})}.
\end{eqnarray*}
\end{proof}

\begin{lemma}\label{lem3} The operators $\mathfrak{R}_{n,q}^{\alpha^{(1)},\dotsb,\alpha^{(r)}}(.;x)$ satisfy
$$\mathfrak{R}_{n,q}^{\alpha^{(1)},\dotsb,\alpha^{(r)}}(s^{2};x) \leq q(x\alpha_{n}^{(1)})^{2} + \frac{x\alpha_{n}^{(1)}}{[n]_q};$$ and $$\big|\mathfrak{R}_{n,q}^{\alpha^{(1)},\dotsb,\alpha^{(r)}}(s^{2}-x^{2};x)\big| \leq 2x^{2}(1-\alpha_{n}^{(1)})+\frac{x\alpha_{n}^{(1)}}{[n]_q}.$$
\end{lemma}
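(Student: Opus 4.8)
The plan is to evaluate the operator directly on $g(s)=s^{2}$, so that the whole problem reduces to controlling the inner factor $\left(\frac{[l_{1}]_{q}}{[n+l_{1}-1]_{q}}\right)^{2}$ multiplying the first-variable weight $\frac{(q^{n};q)_{l_{1}}}{(q;q)_{l_{1}}}(\alpha_{n}^{(1)})^{l_{1}}$. First I would reuse the two elementary $q$-identities already employed in Lemma \ref{lem2}, namely $\frac{[l_{1}]_{q}}{(q;q)_{l_{1}}}=\frac{1}{(1-q)(q;q)_{l_{1}-1}}$ and $\frac{(q^{n};q)_{l_{1}}}{[n+l_{1}-1]_{q}}=(1-q)(q^{n};q)_{l_{1}-1}$, to absorb one power of the quotient. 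This rewrites the generic summand as $\frac{[l_{1}]_{q}}{[n+l_{1}-1]_{q}}\cdot\frac{(q^{n};q)_{l_{1}-1}}{(q;q)_{l_{1}-1}}(\alpha_{n}^{(1)})^{l_{1}}$, so that the remaining obstruction is a single copy of $\frac{[l_{1}]_{q}}{[n+l_{1}-1]_{q}}$.

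The decisive step is the multiplicative splitting $[l_{1}]_{q}=1+q[l_{1}-1]_{q}$, which separates the summand into two contributions. For the part coming from the constant $1$, I would estimate $\frac{1}{[n+l_{1}-1]_{q}}\le\frac{1}{[n]_{q}}$ (legitimate since $l_{1}\ge 1$) and then relabel $l_{1}\mapsto l_{1}-1$; the surviving series is precisely the moment-zero series of Lemma \ref{lem1}, so it collapses to $1$ through the identity (\ref{10}) and leaves the contribution $\frac{x\alpha_{n}^{(1)}}{[n]_{q}}$. For the part coming from $q[l_{1}-1]_{q}$, I would perform a second clean shift with the same two identities applied to the index $l_{1}-1$ (now pairing $[l_{1}-1]_{q}$ with $[n+l_{1}-2]_{q}$), bound the leftover ratio $\frac{[n+l_{1}-2]_{q}}{[n+l_{1}-1]_{q}}\le 1$, and once more collapse the residual series by (\ref{10}); this yields $q(x\alpha_{n}^{(1)})^{2}$. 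Adding the two contributions gives the first displayed inequality. Since $0<q<1$, $x\in[0,1]$ and $\alpha_{n}^{(k)}\in(0,1)$, every term of every series is non-negative, so these termwise estimates are preserved under summation.

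For the second inequality I would first invoke Lemma \ref{lem1} to write $\mathfrak{R}_{n,q}^{\alpha^{(1)},\dotsb,\alpha^{(r)}}(s^{2}-x^{2};x)=\mathfrak{R}_{n,q}^{\alpha^{(1)},\dotsb,\alpha^{(r)}}(s^{2};x)-x^{2}$. The upper one-sided bound is immediate from the first part together with $q(\alpha_{n}^{(1)})^{2}\le 1$, which makes $q(x\alpha_{n}^{(1)})^{2}-x^{2}\le 0$. For the lower one-sided bound I would apply the Cauchy--Schwarz inequality for the positive linear operator, which combined with Lemma \ref{lem2} gives $\mathfrak{R}_{n,q}^{\alpha^{(1)},\dotsb,\alpha^{(r)}}(s^{2};x)\ge\big(\mathfrak{R}_{n,q}^{\alpha^{(1)},\dotsb,\alpha^{(r)}}(s;x)\big)^{2}=(x\alpha_{n}^{(1)})^{2}$; hence $x^{2}-\mathfrak{R}_{n,q}^{\alpha^{(1)},\dotsb,\alpha^{(r)}}(s^{2};x)\le x^{2}\big(1-(\alpha_{n}^{(1)})^{2}\big)\le 2x^{2}(1-\alpha_{n}^{(1)})$, where the last step uses $1+\alpha_{n}^{(1)}\le 2$. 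Reading the two one-sided estimates together produces the bound on the absolute value.

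The hard part will be getting the algebra of the $q$-integers exactly right. The naive additive splitting $[l_{1}]_{q}=[l_{1}-1]_{q}+q^{l_{1}-1}$ produces a leading coefficient $1$ instead of $q$, and hence an estimate strictly weaker than the stated $q(x\alpha_{n}^{(1)})^{2}$; it is the multiplicative form $[l_{1}]_{q}=1+q[l_{1}-1]_{q}$ that isolates the extra factor $q$. The other point requiring care is the bookkeeping of the two index shifts: each residual series must be exactly the series of (\ref{10}), and pairing $[l_{1}-1]_{q}$ with $[n+l_{1}-1]_{q}$ rather than with $[n+l_{1}-2]_{q}$ would destroy the clean telescoping.
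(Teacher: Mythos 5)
Your proposal is correct and follows essentially the same route as the paper's proof: the same two $q$-identities from Lemma \ref{lem2}, the same splitting $[l_1]_q=1+q[l_1-1]_q$ into two sub-series, the same bounds $\frac{1}{[n+l_1-1]_q}\leq\frac{1}{[n]_q}$ and $\frac{[n+l_1-2]_q}{[n+l_1-1]_q}\leq 1$, with each residual series collapsed to $1$ via (\ref{10}), and the same relaxation $q(\alpha_n^{(1)})^2\leq 1$ for the upper one-sided estimate. The only deviation is the lower one-sided bound, where you invoke Cauchy--Schwarz to get $\mathfrak{R}_{n,q}^{\alpha^{(1)},\dotsb,\alpha^{(r)}}(s^2;x)\geq\big(\mathfrak{R}_{n,q}^{\alpha^{(1)},\dotsb,\alpha^{(r)}}(s;x)\big)^2=(x\alpha_n^{(1)})^2$, whereas the paper simply expands $\mathfrak{R}_{n,q}^{\alpha^{(1)},\dotsb,\alpha^{(r)}}((s-x)^2;x)\geq 0$ using Lemmas \ref{lem1} and \ref{lem2}; both are elementary positivity arguments arriving at the stated $2x^2(1-\alpha_n^{(1)})$, yours through the marginally sharper intermediate bound $x^2\big(1-(\alpha_n^{(1)})^2\big)$.
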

\begin{proof}
From the definition of the operator $\mathfrak{R}_{n,q}^{\alpha^{(1)},\dotsb,\alpha^{(r)}}(.;x)$, we have
\begin{eqnarray}\label{e3.4}
\mathfrak{R}_{n,q}^{\alpha^{(1)},\dotsb,\alpha^{(r)}}(s^2;x) &=& \left\{\prod\limits_{i=1}^{r}\big(\alpha_i x^i;q\big)_{n}\right\}
\sum\limits_{p=1}^{\infty}\bigg\{\underset{l_r\geq 1}{\sum\limits_{l_1+2l_2+\dotsb+rl_r=p}}\left\{\prod\limits_{k=1}^{r}\left(q^{n},q\right)_{l_k}\frac{(\alpha_{n}^{(k)})^{l_k}}{\left(q,q\right)_{l_k}}\right\}\left(\frac{[l_{1}]_{q}}{[n+l_{1}-1]_{q}}\right)^2\bigg\}x^p\nonumber\\
&=& x \alpha_{n}^{(1)} \left\{\prod\limits_{i=1}^{r}\big(\alpha_i x^i;q\big)_{n}\right\}
\sum\limits_{p=1}^{\infty}\bigg\{\underset{l_r\geq 1}{\sum\limits_{l_1+2l_2+\dotsb+rl_r-1=p-1}}(q^{n};q)_{l_1-1}\dotsb(q^{n};q)_{l_r}\nonumber\\
&&\frac{[l_{1}]_{q}}{[n+l_{1}-1]_{q}} \frac{(\alpha_n^{(1)})^{l_1-1}\dotsb(\alpha_n^{(r)})^{l_r}}{(q;q)_{l_1-1}\dotsb(q;q)_{l_r}}\bigg\}x^{p-1}\nonumber\\
&=& x \alpha_{n}^{(1)} \left\{\prod\limits_{i=1}^{r}\big(\alpha_i x^i;q\big)_{n}\right\}
\sum\limits_{p=1}^{\infty}\bigg\{\underset{l_r\geq 1}{\sum\limits_{l_1+2l_2+\dotsb+rl_r-1=p-1}}(q^{n};q)_{l_1-1}\dotsb(q^{n};q)_{l_r}\nonumber\\
&&\bigg(\frac{1+q[l_{1}-1]_{q}}{[n+l_{1}-1]_{q}}\bigg) \frac{(\alpha_n^{(1)})^{l_1-1}\dotsb(\alpha_n^{(r)})^{l_r}}{(q;q)_{l_1-1}\dotsb(q;q)_{l_r}}\bigg\}x^{p-1} = \sum_1 + \sum_2,\quad  \text{say.}
\end{eqnarray}
Here,
\begin{eqnarray*}
\sum_1 &=& x \alpha_{n}^{(1)} \left\{\prod\limits_{i=1}^{r}\big(\alpha_i x^i;q\big)_{n}\right\}
\sum\limits_{p=1}^{\infty}\bigg\{\underset{l_r\geq 1}{\sum\limits_{l_1+2l_2+\dotsb+rl_r-1=p-1}}(q^{n};q)_{l_1-1}\dotsb(q^{n};q)_{l_r}\nonumber\\
&&\bigg(\frac{1}{[n+l_{1}-1]_{q}}\bigg) \frac{(\alpha_n^{(1)})^{l_1-1}\dotsb(\alpha_n^{(r)})^{l_r}}{(q;q)_{l_1-1}\dotsb(q;q)_{l_r}}\bigg\}x^{p-1},
\end{eqnarray*}
since $\frac{1}{[n+l_{1}-1]_{q}} \leq \frac{1}{[n]_q}$, then using (\ref{10}), we have
\begin{eqnarray*}
\sum_1 &\leq & \frac{x \alpha_{n}^{(1)}}{[n]_q}.
\end{eqnarray*}
Now
\begin{eqnarray*}
\sum_2 &=& qx \alpha_{n}^{(1)} \left\{\prod\limits_{i=1}^{r}\big(\alpha_i x^i;q\big)_{n}\right\}
\sum\limits_{p=1}^{\infty}\bigg\{\underset{l_r\geq 1}{\sum\limits_{l_1+2l_2+\dotsb+rl_r-1=p-1}}(q^{n};q)_{l_1-1}\dotsb(q^{n};q)_{l_r}\nonumber\\
&&\bigg(\frac{[l_1-1]_q}{[n+l_{1}-1]_{q}}\bigg) \frac{(\alpha_n^{(1)})^{l_1-1}\dotsb(\alpha_n^{(r)})^{l_r}}{(q;q)_{l_1-1}\dotsb(q;q)_{l_r}}\bigg\}x^{p-1}\\
&=& q(x \alpha_{n}^{(1)})^2 \left\{\prod\limits_{i=1}^{r}\big(\alpha_i x^i;q\big)_{n}\right\}
\sum\limits_{p=2}^{\infty}\bigg\{\underset{l_r\geq 2}{\sum\limits_{l_1+2l_2+\dotsb+rl_r-2=p-2}}(q^{n};q)_{l_1-2}\dotsb(q^{n};q)_{l_r}\nonumber\\
&&\bigg(\frac{(1-q^{l_1-1})(1-q^{n+l_1-2})}{(1-q^{l_1-1})(1-q)[n+l_{1}-1]_{q}}\bigg) \frac{(\alpha_n^{(1)})^{l_1-2}\dotsb(\alpha_n^{(r)})^{l_r}}{(q;q)_{l_1-2}\dotsb(q;q)_{l_r}}\bigg\}x^{p-2}\\
&=& q(x \alpha_{n}^{(1)})^2\left\{\prod\limits_{i=1}^{r}\big(\alpha_i x^i;q\big)_{n}\right\}
\sum\limits_{p=2}^{\infty}\bigg\{\underset{l_r\geq 2}{\sum\limits_{l_1+2l_2+\dotsb+rl_r-2=p-2}}(q^{n};q)_{l_1-2}\dotsb(q^{n};q)_{l_r}\nonumber\\
&& \bigg(\frac{[n+l_1-2]_q}{[n+l_{1}-1]_{q}}\bigg)\frac{(\alpha_n^{(1)})^{l_1-2}\dotsb(\alpha_n^{(r)})^{l_r}}{(q;q)_{l_1-2}\dotsb(q;q)_{l_r}}\bigg\}x^{p-2},
\end{eqnarray*}
since $\frac{[n+l_1-2]_q}{[n+l_{1}-1]_{q}} < 1$,  in view of (\ref{10}) we get
\begin{eqnarray*}
\sum_2 & \leq &  q(x \alpha_{n}^{(1)})^2.
\end{eqnarray*}
Finally, using the estimates of $\sum_1$ and $\sum_2$ in (\ref{e3.4}), we obtain
\begin{eqnarray}\label{12}
\mathfrak{R}_{n,q}^{\alpha^{(1)},\dotsb,\alpha^{(r)}}(s^2;x) &\leq & q(x \alpha_{n}^{(1)})^2 + \frac{x \alpha_{n}^{(1)}}{[n]_q}.
\end{eqnarray}
Hence, we can write
\begin{eqnarray*}
\mathfrak{R}_{n,q}^{\alpha^{(1)},\dotsb,\alpha^{(r)}}(s^2;x) -x^2 &\leq & x^{2} (q(\alpha_{n}^{(1)})^{2}-1)+ \frac{x\alpha_{n}^{(1)}}{[n]_q}= -x^{2}(1-q(\alpha_{n}^{(1)})^{2})+\frac{x\alpha_{n}^{(1)}}{[n]_q}.
\end{eqnarray*}
As $q,\alpha_{n}^{(1)} \in (0,1)$, we have
\begin{eqnarray}\label{13}
\mathfrak{R}_{n,q}^{\alpha^{(1)},\dotsb,\alpha^{(r)}}(s^2;x) -x^2 &\leq & \frac{x\alpha_{n}^{(1)}}{[n]_q}.
\end{eqnarray}
Since the operator $\mathfrak{R}_{n,q}^{\alpha^{(1)},\dotsb,\alpha^{(r)}}(.;x)$ is a LPO, using Lemmas \ref{lem1} and \ref{lem2}, we may write
\begin{eqnarray*}
0 \leq \mathfrak{R}_{n,q}^{\alpha^{(1)},\dotsb,\alpha^{(r)}}((s-x)^2;x)&=& \mathfrak{R}_{n,q}^{\alpha^{(1)},\dotsb,\alpha^{(r)}}(s^2;x) - 2x \mathfrak{R}_{n,q}^{\alpha^{(1)},\dotsb,\alpha^{(r)}}(s;x) + x^2;\\
\implies \quad -2x^{2}(1-\alpha_{n}^{(1)}) &\leq & \mathfrak{R}_{n,q}^{\alpha^{(1)},\dotsb,\alpha^{(r)}}(s^{2};x)-x^{2}\\
\text{or,}\quad -2x^{2}(1-\alpha_{n}^{(1)})-\frac{x\alpha_{n}^{(1)}}{[n]_q} &\leq & \mathfrak{R}_{n,q}^{\alpha^{(1)},\dotsb,\alpha^{(r)}}(s^{2};x)-x^{2}.
\end{eqnarray*}
Hence, in view of (\ref{13}), we obtain
\begin{eqnarray*}
\big|\mathfrak{R}_{n,q}^{\alpha^{(1)},\dotsb,\alpha^{(r)}}(s^{2};x)-x^{2}\big| &\leq & 2x^{2}(1-\alpha_{n}^{(1)})+\frac{x\alpha_{n}^{(1)}}{[n]_q}.
\end{eqnarray*}
This complete the lemma.
\end{proof}

\section{Applications}
In this section, we show the convergence of our $q$-Lagrange-Hermite operators $\mathfrak{R}_{n,q}^{\alpha^{(1)},\dotsb,\alpha^{(r)}}$ by applying the earlier explained convergence techniques. For this purpose, we shall need the following important assumptions:\newline
\textit{\textbf{Assumption -1}} We consider the sequence $\alpha^{(1)}_{n} \in (0,1)$, convergent (in classical sense) to 1 as n tends to infinity.\\
\textit{\textbf{Assumption -2}} We assume that the sequences $\big<q_n\big>$ and $\big<q_n^{n}\big>$ converge (again in classical sense) to 1 and $a\in [0, 1)$, respectively, as n tends to infinity.

\subsection{Convergence of $\mathfrak{R}_{n,q_n}^{\alpha^{(1)},\dotsb,\alpha^{(r)}}$ using Deferred weighted-A Statistical method}
To prove the said convergence of our operator, we first recall the following important characteristics of a deferred weighted regular matrix given by Srivastava et al. \cite{HM}:
\begin{theorem}
Suppose $\big<b_n\big>$ and $\big<c_n\big>$ are the sequences of non-negative integers. Then, an infinite matrix $A=(a_{n,k})$ is said to be a deferred weighted regular matrix iff
\begin{eqnarray}\label{22}
\sup_{n}\sum_{k=1}^{\infty}\frac{1}{S_n}\bigg|\sum_{m=b_n +1}^{c_n}s_ma_{m,k}\bigg| &<& \infty;\nonumber\\
\lim_{n\to \infty}\frac{1}{S_n}\sum_{m=b_n +1}^{c_n}s_ma_{m,k} &=& 0, \quad \forall ~k \in \mathbb{N};\\
\lim_{n\to \infty}\frac{1}{S_n}\sum_{m=b_n +1}^{c_n}\sum_{k=1}^{\infty}s_ma_{m,k} &=& 1.\nonumber
\end{eqnarray}
\end{theorem}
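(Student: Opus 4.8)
The plan is to recognize this statement as a Silverman--Toeplitz characterization in disguise and to reduce it to the classical theorem by collapsing the deferred weighted averaging into a single auxiliary matrix. Concretely, I would introduce the composite matrix $B=(B_{n,k})$ defined by
\[
B_{n,k} = \frac{1}{S_n}\sum_{m=b_n+1}^{c_n} s_m a_{m,k},
\]
which is well defined because the regularity conditions $b_n<c_n$ together with $s_1>0$ and $s_m\ge 0$ force $S_n>0$. The whole argument then rests on showing that deferred weighted regularity of $A$ is nothing other than ordinary (Toeplitz) regularity of $B$.

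The key step is the identity: for any convergent (hence bounded) sequence $x=(x_k)$,
\[
\frac{1}{S_n}\sum_{m=b_n+1}^{c_n} s_m (Ax)_m = \sum_{k=1}^{\infty} B_{n,k}\, x_k = (Bx)_n,
\]
so that the deferred weighted mean $\rho_n(Ax)$ of the $A$-transform coincides exactly with the $B$-transform of $x$. Note that the rearrangement used here swaps a \emph{finite} outer sum over $m\in\{b_n+1,\dots,c_n\}$ against the convergent inner series defining $(Ax)_m$, so it is automatic by linearity and needs no Fubini argument. From this identity it follows immediately that $Ax\in c^{DWS}$ with deferred weighted limit $\ell$ for every $x$ converging to $\ell$ if and only if $B$ sends every convergent sequence to a convergent sequence with the same limit, i.e.\ $B$ is regular in the Silverman--Toeplitz sense.

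With this reduction in hand, I would invoke the classical Silverman--Toeplitz theorem: $B$ is regular iff (a) $\sup_n\sum_k |B_{n,k}|<\infty$, (b) $\lim_n B_{n,k}=0$ for each fixed $k$, and (c) $\lim_n\sum_k B_{n,k}=1$. Substituting the definition of $B_{n,k}$ translates (a), (b), (c) verbatim into the three displayed conditions of \eqref{22}, which closes both implications of the theorem. The sufficiency half is then the routine $\epsilon$-splitting: assuming (a)--(c), one estimates $|(Bx)_n-\ell|$ by isolating a finite head of the series (handled by (b)) from its tail (handled by (a) and the boundedness of $x$), together with the contribution of $\ell\bigl(\sum_k B_{n,k}-1\bigr)$ controlled by (c).

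The main obstacle is the necessity half of the embedded Silverman--Toeplitz argument rather than the symbolic bookkeeping. Deriving the uniform bound (a) from mere convergence of $(Bx)_n$ for all $x\in c$ cannot be done from pointwise data alone; it requires a Banach--Steinhaus (uniform boundedness) argument, viewing each row of $B$ as a bounded linear functional on the Banach space of convergent sequences and applying the principle of uniform boundedness to the family so obtained. Conditions (b) and (c) are then recovered by testing against the coordinate sequences $e^{(k)}$ and the constant sequence $\mathbf{1}$, respectively. I would also verify along the way that convergence of $x$ transfers correctly through the $A$-transform, so that the auxiliary family is genuinely defined on all of $c$; this is where care with the boundedness of $x$ and the nonnegativity of the $s_m$ ensures the functionals are well posed.
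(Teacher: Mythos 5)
Your proposal is correct, but note first that the paper contains no proof of this statement to compare against: the theorem is recalled verbatim from Srivastava et al.\ \cite{HM}, with the proof deferred to that reference. Judged on its own, your argument is sound and is in substance the standard route (and the one the cited source follows): collapse the deferred weighted averaging into the single matrix $B_{n,k}=\frac{1}{S_n}\sum_{m=b_n+1}^{c_n}s_m a_{m,k}$, observe that the deferred weighted mean of $Ax$ is exactly $(Bx)_n$ --- the interchange is indeed automatic because the outer sum over $m$ is finite --- and then read off the three displayed conditions as the Silverman--Toeplitz conditions for $B$. Your treatment of the necessity half is also the correct classical mechanism: convergence of $\sum_k B_{n,k}x_k$ for all $x$ in $c_0$ forces each row of $B$ into $\ell^1$ (so the row functionals are bounded with norm $\sum_k|B_{n,k}|$), Banach--Steinhaus applied to the pointwise convergent family yields the uniform bound, and testing against the coordinate sequences $e^{(k)}$ and the constant sequence $\mathbf{1}$ yields the remaining two conditions. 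Your remark that the uniform bound cannot be extracted from pointwise data alone is exactly the right diagnosis of where the only nontrivial work sits.

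One caution, inherited from the theorem statement itself rather than introduced by you: in the sufficiency direction, the three conditions constrain only the aggregated quantities $\frac{1}{S_n}\sum_{m=b_n+1}^{c_n}s_m a_{m,k}$ and cannot by themselves guarantee that the individual row series $(Ax)_m=\sum_k a_{m,k}x_k$ converge for $b_n<m\le c_n$ (rows with $s_m=0$ are invisible in the aggregate, and signed rows can cancel), yet your key identity $\rho_n(Ax)=(Bx)_n$ presupposes that these transforms exist. As in the classical Silverman--Toeplitz setting, this is resolved by making explicit the tacit standing hypothesis that $A$ maps convergent sequences into the domain of the deferred weighted mean, i.e.\ that each relevant row series converges; with that reading spelled out, both implications close exactly as you describe.
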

In view of Theorem \ref{thm1}, it follows that the sequence of operators $\big<\mathfrak{R}_{n,q_n}^{\alpha^{(1)},\dotsb,\alpha^{(r)}}(g)\big>$ converges deferred weighted A-statistically to $g$ if the following holds:
\begin{eqnarray*}
stat_{A}^{DW}-\lim_{n}\norm{\mathfrak{R}_{n,q_n}^{\alpha^{(1)},\dotsb,\alpha^{(r)}}(s^i)-x^i} = 0, \quad ~ \text{for}~ i=0,1,2.
\end{eqnarray*}
Applying Lemma \ref{lem1}, the above condition is obvious for the case $i=0$. Now, for $i=1$, using Lemma \ref{lem2}, we can write
\begin{eqnarray}\label{21}
\norm{\mathfrak{R}_{n,q_n}^{\alpha^{(1)},\dotsb,\alpha^{(r)}}(s)-x} &\leq & (1-\alpha^{(1)}_{n}).
\end{eqnarray}
Let $\epsilon >0$ be an arbitrary real number. If we consider the sets $V_1 =\{n\in \mathbb{N}: \norm{\mathfrak{R}_{n,q}^{\alpha^{(1)},\dotsb,\alpha^{(r)}}(s)-x}\geq \epsilon\}$ and $V_2 = \{n\in \mathbb{N}:(1-\alpha^{(1)}_{n})\geq \epsilon \}$ then from (\ref{21}), we have $V_1 \subseteq V_2$ and hence
\begin{eqnarray*}
\frac{1}{S_n}\sum_{m=b_n+1}^{c_n}\sum_{k\in V_1}s_m a_{m,k} &\leq & \frac{1}{S_n}\sum_{m=b_n+1}^{c_n}\sum_{k\in V_2}s_m a_{m,k}\\
\text{or,}\quad \lim_{n \to \infty}\frac{1}{S_n}\sum_{m=b_n+1}^{c_n}\sum_{k\in V_1}s_m a_{m,k} &\leq & \lim_{n \to \infty}\frac{1}{S_n}\sum_{m=b_n+1}^{c_n}\sum_{k\in V_2}s_m a_{m,k}.
\end{eqnarray*}
From Assumption -1, there exists a positive integer $n_{0}(\epsilon)$ such that $ \lim_{n \to \infty}\frac{1}{S_n}\sum_{m=b_n+1}^{c_n}\sum_{k\in V_2}s_m a_{m,k} =   \lim_{n \to \infty}\frac{1}{S_n}\sum_{m=b_n+1}^{c_n}\sum_{k\leq n_{0}}s_m a_{m,k}$, thus
\begin{eqnarray*}
\lim_{n \to \infty}\frac{1}{S_n}\sum_{m=b_n+1}^{c_n}\sum_{k\in V_1}s_m a_{m,k} &\leq &\lim_{n \to \infty}\frac{1}{S_n}\sum_{m=b_n+1}^{c_n}\sum_{k\leq n_{0}}s_m a_{m,k}.
\end{eqnarray*}
Now, using the fact given in (\ref{22}) to the right hand-side of the above inequality, we have
\begin{eqnarray*}
\lim_{n \to \infty}\frac{1}{S_n}\sum_{m=b_n+1}^{c_n}\sum_{k\in V_1}s_m a_{m,k} & = & 0\\
\text{or} \quad stat_{A}^{DW}-\lim_{n}\norm{\mathfrak{R}_{n,q}^{\alpha^{(1)},\dotsb,\alpha^{(r)}}(s)-x} &=& 0.
\end{eqnarray*}
Now, for the case $i=2$, using Lemma \ref{lem3}, we obtain
\begin{eqnarray}\label{23}
\norm{\mathfrak{R}_{n,q_n}^{\alpha^{(1)},\dotsb,\alpha^{(r)}}(s^2)-x^2} &\leq & 2(1-\alpha^{(1)}_{n}) + \frac{\alpha^{(1)}_{n}}{[n]_{q_n}}.
\end{eqnarray}
Again, we construct the sets $W_1 =\{n\in \mathbb{N}: \norm{\mathfrak{R}_{n,q}^{\alpha^{(1)},\dotsb,\alpha^{(r)}}(s^2)-x^2}\geq \epsilon\}$, $W_2 = \{n\in \mathbb{N}: (1-\alpha^{(1)}_{n}) > \frac{\epsilon}{4}\}$, and $W_3 = \{n\in \mathbb{N}: \frac{\alpha^{(1)}_{n}}{[n]_{q_n}} > \frac{\epsilon}{2}\}$. From (\ref{23}), it is easy to see that $W_1 \subseteq W_2 \cup W_3$ and therefore
\begin{eqnarray*}
\lim_{n \to \infty}\frac{1}{S_n}\sum_{m=b_n+1}^{c_n}\sum_{k\in W_1}s_m a_{m,k} &\leq & \lim_{n \to \infty}\frac{1}{S_n}\sum_{m=b_n+1}^{c_n}\sum_{k\in W_2}s_m a_{m,k} + \lim_{n \to \infty}\frac{1}{S_n}\sum_{m=b_n+1}^{c_n}\sum_{k\in W_3}s_m a_{m,k}.
\end{eqnarray*}
Finally, following the previous logic along with the assumptions and the fact given in (\ref{22}), the required claim is established.

\subsection{Convergence of $\mathfrak{R}_{n,q_n}^{\alpha^{(1)},\dotsb,\alpha^{(r)}}$ using P-summability method}
In order to show the uniform convergence of $\big<\mathfrak{K}_{n,q_n}^{\beta^{(1)},\dotsb,\beta^{(r)}}(g)\big>$ to $g$ on $I_0^1$ by P-summability method, in view of Theorem \ref{thm3}, it is sufficient to establish that
\begin{eqnarray*}
\lim_{u \to R^-} \frac{1}{p(u)}\sum_{n = 1}^{\infty}\norm{\mathfrak{R}_{n,q_n}^{\alpha^{(1)},\dotsb,\alpha^{(r)}}(s^{i})-x^i}p_{n}u^{n-1} &=& 0, \quad \text{for}~ i =0,1,2.
\end{eqnarray*}
Using Lemma \ref{lem1}, it is easy to verify for the case when $i=0$, i.e. \\$\lim_{u \to R^-} \frac{1}{p(u)}\sum_{n= 1}^{\infty}\norm{\mathfrak{R}_{n,q_n}^{\alpha^{(1)},\dotsb,\alpha^{(r)}}(1)-1}p_{n}u^{n-1} = 0$. Now, using Lemma \ref{lem2}, we have
\begin{eqnarray}\label{19}
\frac{1}{p(u)}\sum_{n= 1}^{\infty}\norm{\mathfrak{R}_{n,q_n}^{\alpha^{(1)},\dotsb,\alpha^{(r)}}(s)-x}p_{n}u^{n-1} &\leq & \frac{1}{p(u)}\sum_{n= 1}^{\infty}\big(1-\alpha^{(1)}_{n}\big)p_{n}u^{n-1}.
\end{eqnarray}
From our Assumption-1, for a given $\epsilon >0$ there exists a positive integer $n_0(\epsilon)$ such that $|\alpha^{(1)}_{n}-1|<\frac{\epsilon}{2}$ for all $n > n_{0}(\epsilon)$, thus
\begin{eqnarray*}
\frac{1}{p(u)}\sum_{n = 1}^{\infty}\bigg((1-\alpha^{(1)}_{n})\bigg) p_{n}u^{n-1} &\leq & \frac{1}{p(u)}\sum_{n = 1}^{n_0}(1-\alpha^{(1)}_{n}) p_{n}u^{n-1} +\frac{\epsilon}{2p(u)}\sum_{n = n_0 +1}^{\infty} p_{n}u^{n-1}\\
& < & \frac{1}{p(u)}\sum_{n = 1}^{n_0}(1-\alpha^{(1)}_{n}) p_{n}u^{n-1} +\frac{\epsilon}{2p(u)}\sum_{n =1}^{\infty} p_{n}u^{n-1}.
\end{eqnarray*}
Since $\big<1-\alpha_{n}^{(1)}\big>$ is a bounded sequence, $\exists$ $M_1 > 0$ such that $M_1 = \underset{1\leq n\leq n_0}{\max}(1-\alpha_{n}^{(1)})$ therefore
\begin{eqnarray}\label{19A}
\frac{1}{p(u)}\sum_{n = 1}^{\infty}\bigg((1-\alpha^{(1)}_{n})\bigg) p_{n}u^{n-1} & < & \frac{M_1}{p(u)}\sum_{n = 1}^{n_0} p_{n}u^{n-1} +\frac{\epsilon}{2}.
\end{eqnarray}
Now, in view of the regularity condition given by (\ref{*}) there exists $\delta_{j}(\epsilon) > 0$ such that $\frac{p_{j}u^{j-1}}{p(u)} < \frac{\epsilon}{2M_1n_0}$ for all $R-\delta_{j}(\epsilon) < u < R$, and $j = 1,2,...n_0(\epsilon)$. Consider $\delta(\epsilon) = \min\big(\delta_{1}(\epsilon),\delta_{2}(\epsilon), ...,\delta_{n_0}(\epsilon)\big)$ then $\frac{p_{j}u^{j-1}}{p(u)} < \frac{\epsilon}{2M_1n_0}$ for all $R-\delta(\epsilon) < u < R$ and for every $n = 1,2,...n_0$. Hence from (\ref{19A}) we have
\begin{eqnarray*}
\frac{1}{p(u)}\sum_{n = 1}^{\infty}\bigg((1-\alpha^{(r)}_{n})\bigg) p_{n}u^{n-1} & < & M_1\frac{\epsilon ~n_0}{2M_1n_0}  + \frac{\epsilon}{2} = \epsilon.
\end{eqnarray*}
Consequently, from (\ref{19}) we have
\begin{eqnarray*}
 \frac{1}{p(u)}\sum_{n= 1}^{\infty}\norm{\mathfrak{R}_{n,q_n}^{\alpha^{(1)},\dotsb,\alpha^{(r)}}(s)-x}p_{n}u^{n-1} &<& \epsilon, \quad \forall ~ u\in(R-\delta , R).
\end{eqnarray*}
By a similar reasoning and using Lemma \ref{lem3}, we can show that
\begin{eqnarray*}\label{20}
\frac{1}{p(u)}\sum_{n = 1}^{\infty}\norm{\mathfrak{R}_{n,q}^{\alpha^{(1)},\dotsb,\alpha^{(r)}}(s^{2};x)-x^2} p_{n}u^{n-1} &\leq &\frac{1}{p(u)}\sum_{n = 1}^{\infty}\bigg(2(1-\alpha_{n}^{(1)})+\frac{\alpha_{n}^{(1)}}{[n]_{q_n}}\bigg)p_{n}u^{n-1}\nonumber\\
&<& \epsilon, \quad \text{for some $\theta(\epsilon) >0$}~ and~ \forall ~ u\in(R-\theta , R),
\end{eqnarray*}
in view of the fact that the sequence $\big<\frac{\alpha_{n}^{(1)}}{[n]_{q_n}}\big> \to 0,$ as $n \to \infty$ (keeping in mind the Assumption -2). This completes the requirement.

\section{A Concluding remark on P-summability}
In this section, we highlight the fact that the convergence of the $q$-Lagrange-Hermite operator $\mathfrak{R}_{n,q_n}^{\alpha^{(1)},\dotsb,\alpha^{(r)}}$ through the P-summability method is indeed more powerful in comparison (or a non-trivial type generalization) to the well-known Korovkin theorem. For instance, consider the sequence:
\begin{eqnarray}\label{remeq1}
\mathfrak{H}_{n,q_n}^{\alpha^{(1)},\dotsb,\alpha^{(r)}}(g;x) = (1+y_{n})\mathfrak{R}_{n,q_n}^{\alpha^{(1)},\dotsb,\alpha^{(r)}}(g;x),
\end{eqnarray}
where $\big<y_n\big>=
\begin{array}{cc}
  \bigg\{\begin{array}{cc}
  1 ,& \text{if $n=m^2,$ $m\in \mathbb{N}$ } ,\\
  0,& \text{otherwise} \\
    \end{array}
\end{array}
$. Now if we take $p_n= 1,
$ for all $n \in \mathbb{N}$, then the power series $p(u)= \sum_{n=1}^{\infty}p_{n}u^{n-1}= \frac{1}{1-u}, |u| < 1 $ implies that $R = 1$. Further, we see that
\begin{eqnarray*}
\frac{1}{p(u)}\sum_{n=1}^{\infty}y_{n} p_{n}u^{n-1} = \frac{(1-u)}{u}\sum_{m=1}^{\infty}u^{m^2}.
\end{eqnarray*}
Applying Cauchy root test, the series $\sum_{m=1}^{\infty}u^{m^2}$ is absolutely convergent in $|u|<1$, therefore it follows that
\begin{eqnarray}\label{remeq2}
\lim_{u \to 1^-}\frac{1}{p(u)}\sum_{n=1}^{\infty}y_{n} p_{n}u^{n-1} = \lim_{u \to 1^-} \frac{(1-u)}{u}\sum_{m=1}^{\infty}u^{m^2} = 0.
\end{eqnarray}
Therefore, the sequence $\big<y_n\big>$ converges to zero through P-summability method. From equation (\ref{remeq1}) and Lemma \ref{lem3}, we have
\begin{eqnarray*}
\norm{\mathfrak{H}_{n,q_n}^{\alpha^{(1)},\dotsb,\alpha^{(r)}}(s)-x} &\leq & (1-\alpha_{n}^{(1)}) +  y_{n}\big(\alpha_{n}^{(1)} \big).
\end{eqnarray*}
Since $\big<(1-\alpha_{n}^{(1)})\big>$ converges to 0 as $n \to \infty $, it will also P-summable to zero. Further, from Assumption-2, we see that the sequence $\big<\alpha_{n}^{(1)}\big>$ is bounded, hence in view of (\ref{remeq2}), we have
\begin{eqnarray*}
\lim_{u \to 1^-}\frac{1}{p(u)}\sum_{n=1}^{\infty}p_{n}u^{n-1}\norm{\mathfrak{H}_{n,q_n}^{\alpha^{(1)},\dotsb,\alpha^{(r)}}(s)-x} = 0.
\end{eqnarray*}
Next, using the definition (\ref{remeq1}) and Lemma \ref{lem3}, we obtain
\begin{eqnarray*}
\norm{\mathfrak{H}_{n,q_n}^{\alpha^{(1)},\dotsb,\alpha^{(r)}}(s^2)-x^2} &\leq & \bigg(2\big(1-\alpha_{n}^{(1)})+\frac{\alpha_{n}^{(1)}}{[n]_{q_n}}\bigg) + y_n \bigg(q_n(\alpha_{n}^{(1)})^2+\frac{\alpha_{n}^{(1)}}{[n]_{q_n}}\bigg).
\end{eqnarray*}
Again, we note that the sequence $\big<2\big(1-\alpha_{n}^{(1)})+\frac{\alpha_{n}^{(1)}}{[n]_{q_n}}\big>$    converges to 0 through the P-summability method and $q_n(\alpha_{n}^{(1)})^2+\frac{\alpha_{n}^{(1)}}{[n]_{q_n}} \leq 2$ for all $n\in \mathbb{N}$, therefore using (\ref{remeq2}), we can write
\begin{eqnarray*}
\lim_{u \to 1^-}\frac{1}{p(u)}\sum_{n=1}^{\infty}p_{n}u^{n-1}\norm{\mathfrak{H}_{n,q_n}^{\alpha^{(1)},\dotsb,\alpha^{(r)}}(s^2)-x^2} = 0.
\end{eqnarray*}
Finally, using Theorem \ref{thm3}, we can say that the sequence $\big<\mathfrak{H}_{n,q_n}^{\alpha^{(1)},\dotsb,\alpha^{(r)}}(g;x)\big>$ is P-summable to $g$ while it is imperative to notice that the Korovkin theorem does not hold for the operator $\mathfrak{H}_{n,q_n}^{\alpha^{(1)},\dotsb,\alpha^{(r)}}(.;x)$ defined in (\ref{remeq1}), as the sequence $\big<y_n\big>$ is not convergent. This proves our claim.

\section{Conclusions and Comments}
In this work, we visited two summability methods namely deferred type statistical convergence and P-summability,  and have shown the applications of these methods to the constructive theory of approximation. With these two impressions of convergence, we established two non-trivial Korovkin type convergence theorems for LPOs. Using the multivariate $q$-Lagrange-Hermite polynomials, we constructed a new class of LPOs,  proved some interesting inequalities concerning to the moments, and finally have shown the convergence through the said summability techniques. By an example, we have shown in Section-5 that our Theorem \ref{thm3} provides a great deal over the well-known Korovkin theorem. We understand that as there are many summabilty techniques reported in literature, one may provide a better approximation to our concern. In this connection, interested readers may look into the R-type convergence \cite{Jcon}, Ideal convergence \cite{Pdas} and references therein for future scopes and better understanding.

\begin{center}
\textbf{Acknowledgments}
\end{center}
The second author is heartily indebted to his loving grandfather Mr. Nageshswar Prasad Shukla for all the motivations and introduction to S. Ramanujan's legacy. He also gratefully acknowledges the financial support given to him by the Ministry of Education, Govt. of India to carry out the above work.
\begin{center}
\textbf{Declarations}
\end{center}
\textbf{Conflict of interest:-} Not applicable;\\
\textbf{Data availability:-} We assert that no data sets were generated or analyzed during the preparation of the manuscript;\\
\textbf{Code availability:-} Not applicable;\\
\textbf{Authors' contributions:-} All the authors have equally contributed to the conceptualization, framing and writing of the manuscript.

\end{document}